\documentclass[12pt]{amsart}

\setlength{\textheight}{23cm}
\setlength{\textwidth}{16cm}
\setlength{\topmargin}{-0.8cm}
\setlength{\parskip}{0.3\baselineskip}
\hoffset=-1.4cm

\newtheorem{theorem}{Theorem}[section]
\newtheorem{lemma}[theorem]{Lemma}
\newtheorem{proposition}[theorem]{Proposition}

\usepackage{amssymb}

\newcommand{\C}{{\mathbb C} }

\newcommand{\cL}{{\mathcal L} }
\newcommand{\cM}{{\mathcal M} }
\newcommand{\cO}{{\mathcal O} }
\newcommand{\cX}{{\mathcal X} }
\newcommand{\wt}{\widetilde}
\newcommand{\pt}{\partial}

\def\ol#1{{\overline{#1}}}

\numberwithin{equation}{section}

\begin{document}

\baselineskip=15pt

\title{Deligne pairing and Quillen metric}

\author[I. Biswas]{Indranil Biswas}

\address{School of Mathematics, Tata Institute of Fundamental
Research, Homi Bhabha Road, Bombay 400005, India}

\email{indranil@math.tifr.res.in}

\author[G. Schumacher]{Georg Schumacher}

\address{Fachbereich Mathematik und Informatik,
Philipps-Universit\"at Marburg, Lahnberge, Hans-Meerwein-Strasse, D-35032
Marburg, Germany}

\email{schumac@mathematik.uni-marburg.de}

\subjclass[2000]{14F05, 14D06}

\keywords{Deligne pairing, Quillen metric, determinant line bundle.}

\date{}

\begin{abstract}
Let $X \,\longrightarrow\, S$ be a smooth projective surjective morphism
of relative dimension $n$, where $X$ and $S$ are integral schemes over
$\mathbb C$. Let $L\longrightarrow X$ be a relatively very ample line
bundle. For every sufficiently large positive integer $m$, there is a
canonical isomorphism of the Deligne pairing $\langle L\, ,\cdots\,
,L\rangle\longrightarrow S$ with the determinant line bundle ${\rm
Det}((L- {\mathcal O}_{X})^{\otimes (n+1)}\otimes L^{\otimes m})$
\cite{PRS}. If we fix a hermitian structure on $L$ and a relative
K\"ahler form on $X$, then each of the line bundles ${\rm Det}((L-
{\mathcal O}_{X})^{\otimes (n+1)}\otimes L^{\otimes m})$ and $\langle L\,
,\cdots\, ,L\rangle$ carries a distinguished hermitian structure. We prove
that the above mentioned isomorphism between $\langle L\, ,\cdots\,
,L\rangle\longrightarrow S$ and ${\rm
Det}((L- {\mathcal O}_{X})^{\otimes (n+1)}\otimes L^{\otimes m})$
is compatible with these hermitian
structures. This holds also for the isomorphism in \cite{BSW}
between a Deligne paring and a certain determinant line bundle.
\end{abstract}

\maketitle

\section{Introduction}

Let $S$ and $X$ be integral schemes over $\mathbb C$, and let
$$
f\, :\, X \,\longrightarrow\, S
$$
be a flat projective surjective morphism. Let $n$ be the dimension of the
fibers of $f$. Take algebraic line bundles $L_0\, , L_1\, , \cdots \, ,
L_{n-1}\, , L_{n}$ over $X$. Their \textit{Deligne pairing} is an
algebraic line bundle
$$
\langle L_0\, , \cdots\, ,L_{n}\rangle\, \longrightarrow\, S
$$
\cite{Zh}, \cite{De}. If we equip the line bundles $L_0\, , L_1\, , \cdots \, ,
L_{n}$ with hermitian structure, then
$\langle L_0\, , \cdots\, ,L_{n}\rangle$ gets a hermitian structure
\cite[Section~1.2]{Zh}, \cite{De}.

For any locally free sheaf $F$ on $X$, we have the determinant line bundle
$$
{\rm Det}(F)\,=\, \det  R^{\bullet} f_*F
$$
over $S$ \cite{KM}. Given a
virtual vector bundle $E-F$ on $X$, where $E$ and $F$ are vector bundles
on $X$, there is the determinant line bundle ${\rm Det}(E-F)\,:=\,
{\rm Det}(E)\bigotimes {\rm Det}(F)^*$ on $S$. This determinant
${\rm Det}$ is a homomorphism from the $K$--group of vector bundles
on $X$ to the Picard group of $S$.

Let $L$ be a relatively very ample line bundle on $X$. Then for all
sufficiently large positive integers $m$, there is a natural isomorphism
of line bundles
\begin{equation}\label{vp}
\varphi\, :\,{\rm Det}((L- {\mathcal O}_{X})^{\otimes
(n+1)}\otimes L^{\otimes m})\, \longrightarrow\, \langle L\, ,
\cdots\, ,L\rangle
\end{equation}
\cite[Theorem 3]{PRS}.

Assuming that the above morphism $f$ is smooth, if we fix hermitian
structures on the above two vector bundles $E$ and $F$, and also fix a
relative K\"ahler form on $X$, then the line bundle ${\rm Det}(E-F)$
gets a hermitian structure \cite{BGS}, \cite{Qu}.

Our aim here is to prove the following (see Theorem~\ref{thm-m}):

\begin{theorem}\label{thm0}
Let $f\,:\,X \,\longrightarrow\, S$ be a smooth projective surjective morphism
between integral schemes over $\mathbb C$. Let $(L\, ,h)$ be a relatively very
ample hermitian line bundle on $X$. Fix a relative K\"ahler form on $X$, and
also equip ${\mathcal O}_{X}$ with the hermitian structure for which the constant
function $1$ has point-wise norm $1$. Let $h^Q$ and $h^D$ be the hermitian
structures on the line bundles ${\rm Det}((L- {\mathcal O}_{X})^{\otimes
(n+1)}\otimes L^{\otimes m})$ and $\langle L\, , \cdots\, ,L\rangle$
respectively. Then there is a positive real number $c$ such that
$$
\varphi^*h^D\, =\, c\cdot h^Q\, ,
$$
where $\varphi$ is the isomorphism in \eqref{vp}.

The constant $c$ is compatible with base change.

The constant $c$ is independent of the hermitian structure $h$.
\end{theorem}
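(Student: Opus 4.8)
The plan is to encode the comparison as a single smooth positive function on $S$ and to show it is locally constant by a curvature computation, upgrading this to genuine constancy and to the two functoriality statements afterwards. Since $\varphi$ in \eqref{vp} is an isomorphism of line bundles over $S$, there is a unique smooth function $\rho\,:\,S\,\longrightarrow\,\R_{>0}$ with $\varphi^*h^D\,=\,\rho\cdot h^Q$; the assertion $\varphi^*h^D=c\cdot h^Q$ is precisely the statement that $\rho$ is a constant. The main tool will be the equality of the first Chern forms (curvatures) of $h^Q$ and $\varphi^*h^D$, which forces $dd^c\log\rho=0$, i.e. $\log\rho$ pluriharmonic.

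For the curvature of $h^Q$ I would invoke the Bismut--Gillet--Soul\'e formula \cite{BGS}, by which $c_1({\rm Det}(E-F),h^Q)$ equals the degree-two component of $\int_{X/S}\mathrm{Td}(T_{X/S})\,\mathrm{ch}(E-F)$, computed from the fixed metrics on $T_{X/S}$, $L$ and $\cO_X$. Writing $u:=c_1(L,h)$ and using $\mathrm{ch}(L-\cO_X)=e^u-1$ in $K$-theory, the integrand is $\mathrm{Td}(T_{X/S})\,(e^u-1)^{n+1}e^{mu}$. The point is that $(e^u-1)^{n+1}$ has lowest-order term $u^{n+1}$, of de Rham degree $2(n+1)$, while $\mathrm{Td}(T_{X/S})$ and $e^{mu}$ both start with the constant $1$; hence the only contribution to the degree-$2(n+1)$ part of the integrand, which is the part producing a $(1,1)$-form on $S$ after integration over the $n$-dimensional fibre, is $u^{n+1}$. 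Thus $c_1({\rm Det}(E-F),h^Q)=\int_{X/S}c_1(L,h)^{n+1}$. On the other side, the Deligne metric on $\langle L\,,\cdots\,,L\rangle$ has first Chern form $\int_{X/S}c_1(L,h)^{n+1}$ by the curvature formula for the Deligne pairing \cite{De}, \cite{Zh}. The two curvatures therefore agree, so $\log\rho$ is pluriharmonic.

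Next I would settle base-change compatibility and deduce constancy. Both determinant and Deligne constructions, their metrics, and the isomorphism $\varphi$ are functorial: for a morphism $g\,:\,T\,\longrightarrow\,S$ of integral $\C$-schemes one has $g^*\rho_S=\rho_T$, which is exactly the claim that $c$ is compatible with base change. To promote pluriharmonicity to constancy on the (connected, since integral) base $S$, I would join two given points by an irreducible curve in $S$, pass to its normalization and a smooth projective compactification, and base-change the family there; harmonicity of $\log\rho$ on a compact Riemann surface forces it to be constant, whence $\rho$ takes the same value at the two points.

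Finally, for independence of $h$ I would compare the variations of the two metrics under a conformal change $h\,\longmapsto\,e^{-2\phi}h$. The change of $h^Q$ is governed by the Bott--Chern secondary form in the anomaly formula of \cite{BGS}, while the change of the Deligne metric is the corresponding transgression of $\int_{X/S}c_1(L)^{n+1}$; the same Chern-character identity used above, now at the level of Bott--Chern forms, shows these two variations coincide, so $\rho$ is unaffected. The most delicate step is this last matching of secondary (anomaly) terms, together with the care needed to extend the family over the compactification used in the constancy argument; by contrast the pointwise curvature identity is a purely formal consequence of the expansion of $(e^u-1)^{n+1}e^{mu}$.
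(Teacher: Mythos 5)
Your curvature computation (the expansion of $\mathrm{Td}\cdot(e^u-1)^{n+1}e^{mu}$ showing that both Chern forms equal $\int_{X/S}c_1(L,h)^{n+1}$) is exactly the paper's Proposition~\ref{thm2}, and your base-change remark is fine. The genuine gap is in the step you describe as ``join two points by an irreducible curve, pass to a smooth projective compactification, and base-change the family there; harmonicity of $\log\rho$ on a compact Riemann surface forces it to be constant.'' The base $S$ is an arbitrary integral scheme, so a curve $C\subset S$ through two given points is in general not complete, and over the boundary points of its compactification $\overline C$ the family either is undefined or degenerates to singular fibers. Consequently $\log\rho$ is harmonic only on the open part $C$, and a harmonic function on a non-compact curve need not be constant (think of $\log|s|$ on a punctured disk). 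You cannot base-change the smooth family to $\overline C$; you can only extend it to a flat family with singular fibers, and then the metrics $h^Q$ and $h^D$ are a priori uncontrolled near those fibers.

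Closing this gap is the actual content of the paper's key Lemma~\ref{le:singcurv}: one uses semistable reduction, Moriwaki's theorem that the Deligne metric extends continuously over the degeneration, and the Bismut--Yoshikawa description of the singularity of the Quillen metric at singular fibers (whose singular contribution vanishes here because, by the same expansion \eqref{eq:chL} you used, the relevant boundary integrals of $ch(\xi,h_\xi)$ are zero). Only then does $\log(h^Q/h^D)$ extend as a harmonic function to the compact curve and hence become constant. Your closing sentence acknowledges that ``care'' is needed to extend the family, but this is not a routine verification --- it is the main analytic input, and without it the constancy claim does not follow. Separately, your proposed proof of independence of $h$ by matching Bott--Chern anomaly terms is plausible but not carried out; the paper instead deforms the metric through a family over a compact parameter space (the Picard variety ${\rm Pic}^\nu(X)$ and an auxiliary compact $Z$, Proposition~\ref{pr:pic}) and reuses the no-nonconstant-harmonic-functions argument, which avoids computing any secondary classes.
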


It should be emphasized that the constant $c$ in Theorem~\ref{thm0} may
depend on the family $f$ and also on the N\'{e}ron--Severi class of the
restriction of $L$ to a fiber of $f$. However we do not know explicit examples.
It would be interesting to be able to construct explicit examples with different
values of $c$. It should be mentioned that
the proof of Theorem \ref{thm0} relies on curvature computations.
Therefore, the constant $c$ in Theorem \ref{thm0} cannot be computed using the
method of proof given here.

The construction of $h^D$ requires only a hermitian structure on $L$. In
particular, it
does not require to have a relative K\"ahler form on $X$. Although in general
the Quillen metric on a determinant line bundle depends on the choice of a
relative K\"ahler form, it turns out that the hermitian structure $h^Q$
on ${\rm Det}((L- {\mathcal O}_{X})^{\otimes (n+1)}\otimes L^{\otimes m})$
is independent of the choice of the relative K\"ahler form on $X$; see
Proposition~\ref{pr:kaehindep}.

The proof of Theorem~\ref{thm0} is based on the following:
\begin{itemize}
\item There are no nonconstant harmonic functions on a compact
    connected complex manifold.

\item The metric on a Deligne pairing of line bundles over a smooth
    variety has some regularity as the variety degenerates \cite{mo}.

\item The Quillen metric for hermitian vector bundles over a smooth
    variety can be controlled as the variety degenerates
    \cite{bismut}, \cite{yoshi}.
\end{itemize}

\section{Comparison of curvatures}\label{sec2}

\subsection{Deligne pairing}

Let $f\,:\,X \,\longrightarrow\, S$ be a smooth projective surjective morphism
between integral schemes over $\mathbb C$. The relative dimension of this
projection will be denoted by $n$. Fix $n+1$ line bundles $L_0,
\cdots ,L_n$ on $X$. The Deligne pairing $\langle L_0\, ,L_1\, ,\cdots
\, ,L_n\rangle$ is a line bundle on the base $S$ that defines a map
$$
(\mathcal{P}ic(X))^{n+1}\, \longrightarrow\, \mathcal{P}ic(S)\, ,
$$
where $\mathcal{P}ic$ denotes the stack of line bundles, which is multilinear
and invariant under the permutations of the factors, \cite{De}, \cite{Zh}.
We recall that the above the line bundle 
$\langle L_0\, ,L_1\, ,\cdots \, ,L_n\rangle$ is locally generated by symbols
of the form $\langle \sigma_0\, ,\sigma_1\, , \cdots\, ,\sigma_n\rangle$, where
$\sigma_i$, $0\, \leq\, i\, \leq\, n$, is a rational section
of the line bundle $L_i$ whose divisor ${\rm Div}(\sigma_i)$ has the property
that
$$
\bigcap_{i=0}^n {\rm Div}(\sigma_i)\,=\, \emptyset\, .
$$
The transition functions for the above local trivializations
satisfy the following condition: if
$$
{\mathcal D}\, :=\, \bigcap_{i\not=k} {\rm Div}(\sigma_i)
$$
is flat over $S$, then 
$$
\langle \sigma_0\, ,\cdots \, , \zeta\sigma_k\, ,\cdots\, ,\sigma_n\rangle\,
= \, \zeta[{\mathcal D}]\cdot \langle \sigma s_0\, ,\cdots \,,\sigma_n\rangle
$$
for any rational function $\zeta$, where 
and $\zeta[{\mathcal D}]\,=\,{\rm Norm}_{{\mathcal D}/S}(\zeta)$ is the
norm function. Using the two properties of Deligne pairing mentioned
earlier, it can be shown that this condition uniquely determines the
transition functions. The key input for this is the Weil reciprocity
formula.

For each $0\, \leq\, i\, \leq\, n$, fix a $C^\infty$ hermitian
structure $h_i$ on the line bundle $L_i$. Given this data, there is a
hermitian structure $\langle h_0\, ,\cdots \, ,h_n\rangle$
on the Deligne pairing $\langle L_0\, ,\cdots \, ,L_n\rangle$. This
hermitian structure $\langle h_0\, ,\cdots \, ,h_n\rangle$
satisfies the following condition: Take the hermitian structure
$h'_0\,:=\, h_0\exp(-\mu)$ on $L_0$, where $\mu$ is a $C^\infty$
real valued function on $X$. Then 
$$
\langle h'_0\, ,\cdots \, ,h_n\rangle\,=\, \langle h_0\, ,\cdots \,,h_n
\rangle\cdot \exp(-\widehat{\mu})\, ,
$$
where $\widehat{\mu}$ is the real valued smooth function
$$
\left(\frac{\sqrt{-1}}{2\pi}\right)^n \int_{X/S} \mu
\cdot(\partial{\overline\partial}\log h_1) 
\wedge\cdots \wedge (\partial{\overline\partial}\log h_n)\, .
$$

\subsection{The curvature}

Fix a relative K\"ahler form
on $X$. Fix a holomorphic hermitian line bundle $(L\, ,h)$ on $X$. Equip
${\mathcal O}_X$ with the standard hermitian structure
for which the constant function $1$ has point-wise norm $1$. Take any integer $m$.
Let $h^Q$ be the hermitian structure on ${\rm Det}((L- {\mathcal
O}_X)^{\otimes (n+1)}\otimes L^{\otimes m})$. The hermitian structure on
$\langle L\, , \cdots\, ,L\rangle$ will be denoted by $h^D$.

\begin{proposition}\label{thm2}
The curvature of the hermitian metric $h^D$ on $\langle L\, , \cdots\,
,L\rangle$ coincides with the curvature of the Quillen metric $h^Q$ on
${\rm Det}((L- {\mathcal O}_X)^{\otimes (n+1)}\otimes L^{\otimes m})$.
\end{proposition}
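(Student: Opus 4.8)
The plan is to compute both curvatures explicitly as closed $(1,1)$-forms on $S$ and verify they coincide. Both metrics arise from a standard machinery — the Deligne pairing metric via the formula for $\langle h_0,\cdots,h_n\rangle$ recalled in the excerpt, and the Quillen metric via the Bismut–Gillet–Soulé theory — and in each case the curvature is expressed as a fiber integral of a Chern–Weil form. So the comparison reduces to matching two integrals over $X/S$.

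For the Deligne pairing side, the curvature of $\langle h_0,\cdots,h_n\rangle$ is classically known: the first Chern form of $\langle L_0,\cdots,L_n\rangle$ equals the fiber integral $\int_{X/S}\prod_{i=0}^{n}c_1(L_i,h_i)$. Here all the entries are the same $(L,h)$, so I would get
\begin{equation}\label{plan-deligne}
c_1\bigl(\langle L,\cdots,L\rangle,h^D\bigr)\,=\,\int_{X/S}c_1(L,h)^{\,n+1}.
\end{equation}
For the Quillen side, I would invoke the Bismut–Gillet–Soulé curvature formula for the determinant line bundle: the curvature of the Quillen metric on ${\rm Det}(E)$ is the degree-two component of $\int_{X/S}{\rm Td}(T_{X/S})\,{\rm ch}(E)$. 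Applying this to the virtual bundle $E=(L-\mathcal{O}_X)^{\otimes(n+1)}\otimes L^{\otimes m}$, the Todd class contributes, but here is where the algebra does the work: since ${\rm ch}(L-\mathcal{O}_X)=e^{c_1(L)}-1$ has no degree-zero term, the $(n+1)$-fold product ${\rm ch}\bigl((L-\mathcal{O}_X)^{\otimes(n+1)}\bigr)=(e^{c_1(L)}-1)^{n+1}$ starts in degree $n+1$, so its lowest-degree term is exactly $c_1(L)^{n+1}$. Multiplying by ${\rm ch}(L^{\otimes m})=e^{m\,c_1(L)}$ and by the Todd class and then extracting the component whose fiber integral lands in degree two on $S$, the total-degree-$(n+1)$ piece of the integrand is forced to be precisely $c_1(L,h)^{n+1}$, with all Todd corrections and all contributions from the $e^{m c_1(L)}$ factor pushing into higher fiber degree and hence integrating to zero. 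That reproduces \eqref{plan-deligne}.

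Thus the heart of the argument is the bookkeeping observation that the peculiar virtual bundle $(L-\mathcal{O}_X)^{\otimes(n+1)}\otimes L^{\otimes m}$ was engineered so that its Chern character, after intersecting with the relative tangent Todd class and fiber-integrating, singles out the top self-intersection $c_1(L)^{n+1}$ and nothing else in the relevant degree. The main obstacle I anticipate is not the curvature identity itself but keeping the degree accounting rigorous: I must confirm that every term of the integrand other than $c_1(L)^{n+1}$ has fiber-integral of degree $\neq 2$ on $S$, and that the factor $L^{\otimes m}$ (together with the requirement that $m$ be large) genuinely contributes nothing to the curvature, so that $h^Q$ is in fact independent of $m$ at the level of curvature. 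This is a matter of tracking total cohomological degrees through the product $(e^{c_1(L)}-1)^{n+1}\,e^{m c_1(L)}\,{\rm Td}(T_{X/S})$ and using that the fibers have dimension $n$, so only total-degree-$2n+2$ forms survive integration into a $(1,1)$-form on $S$.

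Once both curvatures are shown equal to the common fiber integral \eqref{plan-deligne}, the proposition follows. I should remark that this curvature computation is exactly the step flagged in the introduction as the reason $c$ cannot be computed by this method: equality of curvatures determines the two metrics only up to a global constant, which is why the full Theorem~\ref{thm0} still needs the degeneration arguments listed there, whereas the present Proposition~\ref{thm2} asks only for the curvatures to agree.
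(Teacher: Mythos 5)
Your proposal is correct and follows essentially the same route as the paper: cite the fiber-integral formula $\int_{X/S}c_1(L,h)^{n+1}$ for the Deligne metric's Chern form, cite the Bismut--Gillet--Soul\'e curvature formula for the Quillen metric, and observe that since ${\rm ch}(L-\mathcal{O}_X)$ starts in degree one, the degree-$(2n+2)$ part of the integrand reduces to $c_1(L,h)^{n+1}$ with no contribution from ${\rm ch}(L^{\otimes m})$ or the Todd class. The only point the paper adds is a remark that when $S$ is singular one passes to a desingularization $S'$ and uses the extension of the BGS result to K\"ahler fibrations over singular bases, a technicality your write-up does not address but which does not change the substance of the argument.
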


\begin{proof}
The Chern form of the metric $h^D$ on $\langle L,\cdots,L\rangle$
coincides with the fiber integral
\begin{equation}\label{del}
\int_{X/S} c_1(L,h)^{n+1}
\end{equation}
(see \cite[p. 81, Section 1.2]{Zh}). On the other hand, a theorem of
Bismut, Gillet and Soul\'e, \cite[Theorem 0.1]{BGS}, says that the Chern
form of the determinant line bundle
$$
({\rm Det}((L- {\mathcal
O}_X)^{\otimes (n+1)}\otimes L^{\otimes m})\, , h^Q)\, \longrightarrow\,
S
$$
is the degree two component of the Riemann-Roch fiber integral
\begin{equation}\label{ht2}
c_1({\rm Det}((L- {\mathcal O}_{X})^{\otimes (n+1)}\otimes L^{\otimes m}
,h^Q)\,=\, \Big(\int_{X/S}ch(L-\cO_X,h)^{n+1}\cdot ch(L,h)^m\cdot
Td(X/S)\Big)_{(2)}\, ,
\end{equation}
where $Td$ is the Todd form. This result of \cite{BGS} was extended to
(smooth) K\"ahler fibrations over singular base spaces in
\cite[\S~12]{f-s}. Since the base $S$ is reduced and irreducible, we may
choose a desingularization $S'$ of $S$ and consider the pulled back family
$X\times_S S'$ over $S'$. Let $(L'\, ,h')$ be the pullback of
$(L\, ,h)$ to $X\times_S S'$.
Then \eqref{ht2} for $L'$ coincides with the pullback of \eqref{ht2} to $S'$.

Note that
$$
ch(L-\cO_X, h) \,=\, c_1(L,h) + ~\text{~higher~order~terms}\, .
$$
Hence the only contribution of the differential form $ch(L,h)^m\cdot
Td(X/S)$ in the integral in \eqref{ht2} is the constant $1$, and also the
higher order terms in $ch(L-\cO_X, h)$ do not contribute in the integral.
Consequently, the integral in \eqref{ht2} coincides with the one in
\eqref{del}.
\end{proof}

\section{Independence from the relative K\"ahler
structure}\label{sec2a}

Let $f\,:\, X \,\longrightarrow\, S$ be a smooth projective surjective
morphism of relative dimension $n$, where $X$ and $S$ are integral schemes
over $\mathbb C$. Fix a relative K\"ahler form $\omega_{\cX/S}$ on $X$.
Let $L$ be a line bundle on $X$ equipped with a hermitian structure. As
before, fix the hermitian metric on ${\mathcal O}_X$ for which the constant
function $1$ has point-wise norm $1$. Let $h^Q$
be Quillen metric on ${\rm Det}( (L- {\mathcal O}_{X})^{\otimes
(n+1)}\otimes L^{\otimes m})$.

\begin{proposition}\label{pr:kaehindep}
The Quillen metric $h^Q$ on ${\rm Det}((L- {\mathcal O}_{X})^{\otimes
(n+1)}\otimes L^{\otimes m})$ does not depend on the relative K\"ahler
form $\omega_{\cX/S}$.
\end{proposition}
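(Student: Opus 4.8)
The plan is to control the variation of the Quillen metric by means of the anomaly formula of Bismut, Gillet and Soulé \cite{BGS}, which is the only effective tool describing how the Quillen metric (and in particular its analytic torsion factor) reacts to a change of the metric data. Concretely, let $\omega_0$ and $\omega_1$ be two relative K\"ahler forms on $X$, and write $h^Q_0$ and $h^Q_1$ for the corresponding Quillen metrics on ${\rm Det}(\xi)$, where $\xi\,:=\,(L-\cO_X)^{\otimes(n+1)}\otimes L^{\otimes m}$ carries the fixed Hermitian structure induced by $h$ and by the standard structure on $\cO_X$. Since the Hermitian metric on $\xi$ does not involve the relative K\"ahler form, only the metric on the relative tangent bundle $TX/S$ changes, and the anomaly formula then expresses $\log(h^Q_1/h^Q_0)$ as the degree-zero component on $S$ of the fibre integral
$$
-\int_{X/S}\widetilde{Td}(TX/S;\omega_0,\omega_1)\cdot ch(\xi,h)\, ,
$$
where $\widetilde{Td}(TX/S;\omega_0,\omega_1)$ is the Bott--Chern secondary class interpolating between the Todd forms of the two metrics on $TX/S$ determined by $\omega_0$ and $\omega_1$.

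The heart of the argument is then a degree count. Since $ch(L-\cO_X,h)\,=\,c_1(L,h)+\text{~higher~order~terms}$ begins in real degree $2$, the factor $ch((L-\cO_X)^{\otimes(n+1)},h)$ begins in degree $2(n+1)\,=\,2n+2$, and multiplying by $ch(L^{\otimes m},h)$ does not lower this; hence $ch(\xi,h)$ has no component of degree $\le 2n$. The secondary class $\widetilde{Td}(TX/S;\omega_0,\omega_1)$ is a sum of forms of non-negative even degree, so the product $\widetilde{Td}(TX/S;\omega_0,\omega_1)\cdot ch(\xi,h)$ likewise has no component of degree $\le 2n$. As the fibres have real dimension $2n$, the degree-zero part of the fibre integral is exactly the integral of the degree-$2n$ part of the integrand, which vanishes. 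Therefore $\log(h^Q_1/h^Q_0)\,=\,0$, that is, $h^Q$ is independent of the relative K\"ahler form. (This is consistent with Proposition~\ref{thm2}, where the same vanishing in low degrees already makes $Td(X/S)$ drop out of the curvature.)

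Two technical points must still be dealt with. First, the anomaly formula of \cite{BGS} is stated over a smooth base; to accommodate the possibly singular base $S$ I would invoke the extension to smooth K\"ahler fibrations over singular bases already used in the proof of Proposition~\ref{thm2} (via \cite{f-s}), or equivalently pull the whole family back to a desingularization $S'$ of $S$, apply the formula over the smooth $S'$, and note that equality of the two metrics over the dense smooth locus of $S$ together with continuity forces equality over all of $S$. Second, $\xi$ is a virtual bundle of rank zero, so one applies the anomaly formula to each honest bundle occurring in the expansion of $\xi$ and assembles the contributions; because the formula is additive (linear in $ch(\xi)$), the assembled correction term is precisely the fibre integral above.

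The main obstacle is conceptual rather than computational: in general the analytic torsion part of the Quillen metric does depend on the relative K\"ahler form, so the independence cannot be seen directly from the definition, only through the anomaly formula. The crux is to recognise that the Chern character of the specific virtual bundle $\xi$ vanishes in all degrees $\le 2n$, so that after multiplication by the secondary Todd class the top fibre degree $2n$ is never reached. Once this degree bookkeeping is in place the vanishing of the metric variation is immediate.
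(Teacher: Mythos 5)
Your proof is correct and follows essentially the same route as the paper: the anomaly formula of Bismut--Gillet--Soul\'e expressing $\log(h^Q_1/h^Q_0)$ as an integral of the secondary Todd class against $ch(\xi,h_\xi)$, combined with the observation that $ch(\xi,h_\xi)$ begins in degree $2n+2$ so the degree-$2n$ component of the integrand vanishes. The only cosmetic difference is that the paper disposes of the base by invoking the base change property of $h^Q$ to reduce at once to the absolute case $S=\{pt\}$, whereas you handle the (possibly singular) base via desingularization; both are fine.
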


\begin{proof}
In view of the base change property of $h^Q$, it suffices to prove the
proposition for the absolute case, meaning when $S$ is a single point.

So, let $(X\, ,\omega_X)$ be a compact connected K\"ahler manifold, and let
$(L\, ,h)$ be a holomorphic hermitian line bundle on it. Let $\xi$ denote the
virtual vector bundle $(L - \cO_X)^{\otimes (n+1)}\otimes L^{\otimes m}$
equipped with the (virtual) hermitian structure $h_\xi$ given by $h$ and
the hermitian metric on ${\mathcal O}_X$ for which the constant
function $1$ has point-wise norm $1$. Then the Chern
character form for $h_\xi$ is
\begin{equation}\label{eq:chL}
ch(\xi,h_\xi)\,=\,  c_1(L,h)^{n+1} +
\text{higher~order~terms.}
\end{equation}

Let $\omega'_X$ be another K\"ahler form on $X$. Bott and Chern, \cite{BC},
defined a class of differential forms $\wt{Td}(\omega_X,\omega'_X)$,
unique in the space of forms modulo the images of $\pt_X$ and $\ol\pt_X$,
such that
$$
\frac{1}{2\pi\sqrt{-1}} \pt_X\ol\pt_X \wt{Td}(\omega_X,\omega'_X) \,=\,
Td(X,\omega_X)-Td(X,\omega'_X)\, .
$$

Let $h^Q$ (respectively, $h^{'Q}$) be the Quillen metric on the
determinant line bundle (now it is just a complex line as $S$ is
only a point) ${\rm
Det}((L- {\mathcal O}_{X})^{\otimes n}\otimes L^{\otimes m})$ for the K\"ahler
metric $\omega_X$ (respectively, $\omega'_X$). A theorem of Bismut, Gillet
and Soul\'e, \cite[Theorem~0.2]{BGS}, says that
\begin{equation}\label{eq:bgs02}
\log\left(\frac{h^Q}{h^{'Q}}\right)\, =\,
\int_X\wt{Td}(\omega_X,\omega'_X)ch(\xi,h_\xi)\, .
\end{equation}
Only the term in degree $2n$ of the above integrand contributes, and by
\eqref{eq:chL}, this term vanishes. Therefore,
$$
\log\left(\frac{h^Q}{h^{'Q}}\right)\, =\, 0\, .
$$
This implies that $h^Q\,=\, h^{'Q}$. This completes the proof of the
lemma.
\end{proof}

We fix a complex projective manifold $X$. Let $\text{NS}(X)$ be the N\'eron--Severi
group $H^2(X,\, {\mathbb Z})\bigcap H^{1,1}(X)$ of $X$.
For an element $\nu\,\in\, \text{NS}(X)$ of $X$, let $P\,=\,{\rm Pic}^{\nu}(X)$
be the connected component of the Picard group of $X$ corresponding to $\nu$.
We choose $\nu$ such that all line bundles on $X$ lying in $P$ are very ample.
Take any $L\, \in\, P$, and take any hermitian
structure $h_L$ on $L$. Let $h^Q_L$ and $h^D_L$ be the hermitian
structures on ${\rm Det}((L- {\mathcal O}_{X})^{\otimes (n+1)}\otimes
L^{\otimes m})$ and $\langle L\, ,\cdots\, ,L\rangle$ respectively.

\begin{proposition}\label{pr:pic}
There exists a real number $c_\nu\, >\, 0$, that depends only on $X$ and
$\nu\,\in\, {\rm NS}(X)$, such that for any $L$ and $h_L$ as above, the
following equality of hermitian structures on ${\rm Det}((L- {\mathcal O}_{X})^{
\otimes (n+1)}\otimes L^{\otimes m})$ holds:
\begin{equation}
c_\nu\cdot h^Q_L\,=\, \varphi^* h^D_L\, ,
\end{equation}
where $\varphi$ is the isomorphism in \eqref{vp}. In particular, $c_\nu$
is independent of the point $L\, \in\, {\rm Pic}^{\nu}(X)$ and the hermitian
structure $h_L$.
\end{proposition}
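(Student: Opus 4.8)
The plan is to view the quantity $R(L,h_L)\,:=\,\varphi^*h^D_L/h^Q_L$ as a positive real number (since $S$ is a point, both $h^Q_L$ and $h^D_L$ are hermitian norms on one-dimensional complex vector spaces, identified through $\varphi$), and to prove that $R$ is independent of both the hermitian structure $h_L$ and the point $L\,\in\,P$. I would carry this out in two steps: first fixing $L$ and varying $h_L$, and then varying $L$ over the compact parameter space $P$.

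For the first step, fix $L$ and let $h_L'\,=\,h_L\exp(-\mu)$ be a second hermitian structure, with $\mu$ a real $C^\infty$ function on $X$. On the Deligne side, applying the variation formula from Section~\ref{sec2} to each of the $n+1$ slots of $\langle L,\cdots,L\rangle$ and summing the resulting telescoping contributions expresses $\log(h^D_{L,h_L'}/h^D_{L,h_L})$ as an integral over $X$ of $\mu$ against $\sum_{j=0}^{n}c_1(L,h_L)^{\,n-j}c_1(L,h_L')^{\,j}$. On the Quillen side, the companion anomaly formula of Bismut, Gillet and Soul\'e \cite{BGS} (the analogue of \eqref{eq:bgs02} for a change of the hermitian metric on the virtual bundle $\xi\,=\,(L-\cO_X)^{\otimes(n+1)}\otimes L^{\otimes m}$ rather than of the K\"ahler form) expresses $\log(h^Q_{L,h_L'}/h^Q_{L,h_L})$ as $\int_X Td(X)\,\widetilde{ch}(\xi;h_\xi,h_\xi')$ in the relevant degree. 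Here one uses \eqref{eq:chL}: because $ch(\xi,\cdot)\,=\,c_1(L,\cdot)^{n+1}+\text{higher order}$, only the leading secondary class survives, the Todd factor and all higher-order terms drop out for degree reasons, and the surviving integral is exactly the Bott--Chern transgression of $c_1(L,\cdot)^{n+1}$, namely the same integral of $\mu$ appearing on the Deligne side. Hence the two variations cancel and $R(L,h_L)$ is independent of $h_L$; write $R(L)$ for this common value.

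For the second step, I would realize all the $L\,\in\,P$ in a single smooth projective family. Choosing a base point $x_0\,\in\,X$, let $\mathcal L\,\longrightarrow\,X\times P$ be the Poincar\'e bundle (normalized along $\{x_0\}\times P$), and let $f\,:\,X\times P\,\longrightarrow\,P$ be the projection; this is a smooth projective surjective morphism whose fiber over $[L]$ is $X$ together with $L$. Equip $\mathcal L$ with an arbitrary global $C^\infty$ hermitian structure and take the pullback of a fixed K\"ahler form $\omega_X$ as relative K\"ahler form. For all large $m$ (a single $m$ works since $P$ is proper and every member of $P$ is very ample by hypothesis) we obtain metrics $h^D$ and $h^Q$ on $\langle\mathcal L,\cdots,\mathcal L\rangle$ and $\mathrm{Det}((\mathcal L-\cO_X)^{\otimes(n+1)}\otimes\mathcal L^{\otimes m})$ over $P$, together with the isomorphism $\varphi$ of \eqref{vp}. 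By Proposition~\ref{thm2} their curvatures coincide, so $\log(\varphi^*h^D/h^Q)$ is a globally defined pluriharmonic, hence harmonic, function on $P$; its value at $[L]$ is $\log R(L)$ by the first step. Since $P\,=\,\mathrm{Pic}^{\nu}(X)$ is a torsor under the abelian variety $\mathrm{Pic}^0(X)$, and hence a compact connected complex manifold, it carries no nonconstant harmonic functions, so $\log R(L)$ is constant on $P$. Setting $c_\nu$ equal to this constant value yields $c_\nu\cdot h^Q_L\,=\,\varphi^*h^D_L$ for every $L$ and every $h_L$, with $c_\nu$ depending only on $X$ and $\nu$.

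The main obstacle is the first step: the two metrics are built by entirely different recipes (an elementary multilinear norm for the Deligne pairing, an analytic torsion correction for the Quillen metric), so their responses to a change of $h_L$ must be matched term by term. What makes this work is purely a matter of degrees --- the same vanishing of higher-order and Todd contributions that drives Proposition~\ref{thm2} and Proposition~\ref{pr:kaehindep} --- so that both anomalies collapse to the single Bott--Chern transgression integral of $c_1(L,\cdot)^{n+1}$. Once this is in place, the second step is essentially formal, resting only on the curvature identity of Proposition~\ref{thm2} and on the compactness of the Picard torus.
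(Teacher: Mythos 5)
Your proposal is correct, and its second step (the Poincar\'e bundle over $P$, the curvature identity of Proposition~\ref{thm2}, and the absence of nonconstant harmonic functions on the compact connected manifold $P$) is exactly the first half of the paper's proof. Where you genuinely diverge is in establishing independence of the hermitian structure $h_L$. The paper avoids all anomaly computations: it takes a compact connected manifold $Z$ with two marked points $z_0,z_1$, chooses metrics $h_1,h_2$ on $p_X^*L\rightarrow X\times Z$ restricting to $h^1_L,h^2_L$ over $z_0$ and agreeing over $z_1$, and then runs the same harmonicity argument over $Z$ to get constants $d_1=d_2$ by base change at $z_1$; so the only analytic input is again Proposition~\ref{thm2} plus the maximum principle. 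You instead compute both anomalies directly: the telescoped change-of-metric formula for $\langle L,\cdots,L\rangle$ gives $\int_X\mu\sum_{j=0}^{n}c_1(L,h_L)^{n-j}c_1(L,h_L')^{j}$, and the Bismut--Gillet--Soul\'e anomaly formula for a change of metric on the virtual bundle $\xi$ collapses, by \eqref{eq:chL} and degree reasons, to the Bott--Chern transgression of $c_1(L,\cdot)^{n+1}$, which is the same integral. This is a sound argument, but it rests on two things the paper never needs: the metric-anomaly theorem of \cite{BGS} (the paper only cites the curvature formula and the K\"ahler-anomaly formula \eqref{eq:bgs02}), and an exact match of normalizations and signs between the two transgressions --- a point you should actually verify rather than wave at, since an overall constant discrepancy is precisely what the statement is about, although a multiplicative mismatch would have to be $\mu$-dependent and so would in fact be detected. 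What your route buys is an explicit identification of the two anomalies with a single secondary class, which is more informative; what the paper's route buys is robustness --- it reuses one soft mechanism (equal curvatures plus compactness plus base change) uniformly throughout, consistent with the fact that the method never attempts to compute the constant $c_\nu$.
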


\begin{proof}
Fix a Poincar\'e line bundle ${\mathcal L}\, \longrightarrow\, X\times P$.
Let $h$ be a hermitian structure on $\mathcal L$. Consider the trivial
family $X\times P \,\longrightarrow\, P$. Let
$$
{\rm Det}(({\mathcal L}- {\mathcal
O}_{X\times P})^{\otimes (n+1)}\otimes {\mathcal L}^{\otimes m})
\, \longrightarrow\,  P
$$
be the determinant line bundle. Let $h^Q$ and $h^D$ be the hermitian
structures on the line bundles ${\rm Det}(({\mathcal L}- {\mathcal
O}_{X\times P})^{\otimes (n+1)}\otimes {\mathcal L}^{\otimes m})$ and
$\langle {\mathcal L}\, ,\cdots\, , {\mathcal L}\rangle$ respectively. The
curvatures of $h^Q$ and $h^D$ coincide by Proposition~\ref{thm2}.
Consequently,
$$
\log\left(\frac{\varphi^* h^D}{h^{Q}}\right)
$$
is a harmonic function on $P$, where $\varphi$ is the isomorphism in
\eqref{vp}. On the other hand, there are no nonconstant harmonic functions
on $P$ because it is compact and connected. Therefore, $(\varphi^* h^D)/h^Q$ is a
constant function on $P$.

For any point $L\, \in\, P$, any hermitian structure on ${\mathcal
L}\vert_{X\times\{L\}}\,=\, L$ can be extended to a hermitian structure on
$\mathcal L$. Therefore, to prove the proposition, it suffices to show
that $(\varphi^* h^D_L)/h^Q_L$ is independent of the hermitian structure
on $L$.

To prove that $(\varphi^* h^D_L)/h^Q_L$ is independent of the hermitian
structure on $L$, take any two hermitian structures $h^1_L$ and $h^2_L$ on
$L$. Let $Z$ be a compact connected complex manifold of positive
dimension. Fix two distinct points $z_0$ and $z_1$ of $Z$. Consider the
line bundle
$$
p^*_X L\, \longrightarrow\, X\times Z\, ,
$$
where $p_X$ is the natural projection of $X\times Z$ to $X$. Let $h_1$ and $h_2$
be two hermitian structures on $p^*_X L$ such that
\begin{enumerate}
\item the restriction of $h_1$ (respectively $h_2$) to $(p^*_X
    L)\vert_{X\times\{z_0\}}$ is $h^1_L$ (respectively $h^2_L$), and

\item the restrictions of $h_1$ and $h_2$ to $(p^*_X
    L)\vert_{X\times\{z_1\}}$ coincide.
\end{enumerate}
Let $h^Q_1$ (respectively, $h^Q_2$) be the hermitian metric on
$$
{\rm Det}((p^*_X L- {\mathcal
O}_{X\times Z})^{\otimes (n+1)}\otimes (p^*_X L)^{\otimes m})
\, \longrightarrow\, Z
$$
for $h_1$ (respectively $h_2$). Similarly, let $h^D_1$ (respectively, $h^D_2$) be
the hermitian metric on
$$
\langle p^*_X L\, ,\cdots\, , p^*_X L\rangle\, \longrightarrow\, Z
$$
for $h_1$ (respectively $h_2$). As before, there is a constant $d_1$
(respectively, $d_2$) such that
$$
d_1\cdot h^Q_1\,=\, \varphi^* h^D_1 ~\, \text{ (respectively,~}
d_2\cdot h^Q_2\,=\, \varphi^* h^D_2{\rm )}\, ,
$$
because there are no nonconstant harmonic functions on the compact connected
complex manifold $Z$ (the
isomorphism $\varphi$ is as in \eqref{vp}). Note that we are using that the
curvature of $h^Q_1$ (respectively, $h^Q_2$) coincides with that of
$h^D_1$ (respectively, $h^D_2$) by Proposition~\ref{thm2}. Since the
restriction of $h_1$ to $(p^*_X L)\vert_{X\times\{z_1\}}$ coincides, by
construction, with the restriction of $h_2$ to $(p^*_X
L)\vert_{X\times\{z_1\}}$, we now conclude that $d_1\,=\, d_2$ using base
change.
\end{proof}

Henceforth, we will identify the two sides of \eqref{vp} using the
isomorphism $\varphi$. In particular, we will omit referring explicitly to
$\varphi$.

\section{Comparison of metrics: the case of Riemann
surfaces}\label{sec3}

The above argument that uses Proposition~\ref{thm2} combined with the fact that
there are no nonconstant harmonic functions on a compact complex manifold,
can be extended to line bundles on families of Riemann surfaces.

\begin{proposition}\label{pc}
Let $X$ be a compact connected Riemann surface of genus $p$ with $p \,
\geq\, 3$, and let $L$ be a holomorphic hermitian line bundle on $X$ of
degree $d$, with $d\, \geq\, 2p-1$. Let $h^Q$ and $h^D$ be the hermitian
structures on ${\rm Det}((L- {\mathcal O}_{X})^{\otimes 2}\otimes
L^{\otimes m})$ and $\langle L\, ,L\rangle$ respectively. Then the
positive real number $c$ that satisfies the identity
$$
c\cdot h^Q\, =\, h^D
$$
depends only on $p$ and $d$. In other words, $c$ is independent of both
the conformal structure of $X$ and the holomorphic structure of $L$.
\end{proposition}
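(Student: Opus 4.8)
The plan is to promote the harmonic‑function argument behind Proposition~\ref{pr:pic} from a single curve to families of Riemann surfaces. First, by the reasoning of Proposition~\ref{pr:pic} applied with $X$ a genus‑$p$ Riemann surface and $\nu\in {\rm NS}(X)$ the class of degree $d$, the constant $c$ is already independent of the holomorphic line bundle $L\in {\rm Pic}^d(X)$ and of the hermitian structure $h$ on it; the degree bound $d\,\geq\,2p-1$ is what makes the isomorphism $\varphi$ of \eqref{vp} and the determinant construction available fibrewise. Thus, with $p$ and $d$ fixed, $c\,=\,c(X)$ is a well‑defined positive real number attached to the conformal structure of $X$ alone, and it remains only to show that $c(X)$ does not vary with $X$.

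The core is the following family version of the argument. Let $\pi\,:\,\mathcal{X}\,\to\, B$ be a smooth projective family of genus‑$p$ Riemann surfaces over a connected base $B$, choose (after passing to a finite cover of $B$, or by working on the relative Picard scheme ${\rm Pic}^d_{\mathcal{X}/B}$ carrying a Poincar\'e bundle) a relative degree‑$d$ line bundle $\mathcal{L}$ on $\mathcal{X}$, equip it with a hermitian structure, and fix a relative K\"ahler form. Then Proposition~\ref{thm2} shows that the curvatures of $h^D$ and $h^Q$ coincide over $B$, so $\log(\varphi^* h^D/h^Q)$ is harmonic. By the first step its fibrewise value at $b$ is $\log c(\mathcal{X}_b)$, independent of the chosen $\mathcal{L}_b$, so it descends to a harmonic function $\log c$ on $B$. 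When $B$ is compact and connected this forces $c(\mathcal{X}_b)$ to be independent of $b$. In other words, $c$ is constant along any complete connected family of smooth genus‑$p$ curves.

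It remains to connect arbitrary genus‑$p$ curves by such families, and this is the main obstacle, since the moduli space $\mathcal{M}_p$ is not compact. The plan is to work inside the projective compactification $\overline{\mathcal{M}}_p$: given two curves, take an irreducible complete curve $B\subset \overline{\mathcal{M}}_p$ through both of their moduli points (a general complete‑intersection curve, irreducible as $\dim \mathcal{M}_p=3p-3\geq 6$ for $p\geq 3$), meeting the boundary in a finite set $\{b_1,\dots,b_k\}$, and pull back the universal family of stable curves after semistable reduction. Over $B^\circ\,=\,B\setminus\{b_1,\dots,b_k\}$ the fibres are smooth, so $\log c$ is harmonic there; the point is to cross the nodal fibres. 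Here I would invoke the regularity of the Deligne pairing metric under degeneration \cite{mo} together with the control of the Quillen metric under degeneration \cite{bismut}, \cite{yoshi}: these estimates show that the divergent contributions to $h^D$ and to $h^Q$ at each $b_j$ match and cancel in the ratio, so that $\log(\varphi^* h^D/h^Q)$ stays bounded near $b_j$. A bounded harmonic function on a punctured disc extends harmonically across the puncture, giving a harmonic, hence constant, function on the compact connected $B$; in particular $c$ agrees at the two chosen curves. Since any two points of $\mathcal{M}_p$ are joined in this way, $c$ depends only on $p$ and $d$. The hypothesis $p\geq 3$ enters precisely at this stage, guaranteeing that the moduli space is large enough to carry the connecting families (for $p=2$ the space $\mathcal{M}_2$ is affine and the argument collapses). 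The delicate point, which I expect to be the crux, is verifying that the two sets of degeneration estimates are sharp enough to yield genuine boundedness of the difference, rather than merely matching leading singularities up to an uncontrolled bounded error.
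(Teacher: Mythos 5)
Your overall strategy (reduce to dependence on the conformal structure alone via Proposition~\ref{pr:pic}, then propagate constancy of $c$ along complete families using harmonicity of $\log(\varphi^*h^D/h^Q)$) matches the paper's, but you diverge at the connectivity step, and that is where your argument both departs from the paper and leaves a real gap. The paper's proof of this proposition deliberately avoids degenerations altogether: it uses the \emph{Satake} compactification $\widetilde{\cM}_p$, whose boundary has codimension at least two precisely when $p\,\geq\,3$, so that a dense set of pairs of points of $\cM_p$ can be joined by images of compact Riemann surfaces lying \emph{entirely inside} $\cM_p$ (after a finite cover, e.g.\ adding a level structure, to get an honest family). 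Every fiber in the connecting families is then smooth, the harmonic-function argument applies directly, and one concludes by continuity of $c$ on $\cM_p$. This is also the true role of the hypothesis $p\,\geq\,3$; your claim that it is needed because ``$\cM_2$ is affine and the argument collapses'' is inconsistent with your own construction, which crosses the boundary and therefore would not need $p\,\geq\,3$ at all.

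The gap in your route is the step you yourself flag as the crux: you assert that the degeneration estimates of Moriwaki, Bismut and Yoshikawa make ``the divergent contributions to $h^D$ and to $h^Q$ match and cancel in the ratio,'' but no such cancellation between the two metrics is available or needed, and you give no argument for boundedness. The actual mechanism (which the paper supplies only later, in Lemma~\ref{le:singcurv}, for the general case) is asymmetric: $h^D$ extends \emph{continuously} across the boundary by Moriwaki's theorem, while for $h^Q$ the potentially singular term in the Bismut--Yoshikawa expansion is an integral against $ch(\xi,h_\xi)$ over the singular locus of the degenerate fiber, and this integral \emph{vanishes identically} because $ch(\xi,h_\xi)\,=\,c_1(L,h)^{n+1}+\cdots$ has no components in low degree --- the same structural fact that drives Proposition~\ref{thm2}. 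What remains of the Chern current of $h^Q$ is only an $L^r_{loc}$ piece ($1\,\leq\, r\,<\,2$), which rules out a $\log|s|$ term in the local expression $a+\gamma\log|s|\cdot{\rm Re}(\psi)$ of the harmonic function and forces a harmonic extension across the puncture. Without identifying this vanishing, ``boundedness of the difference'' is exactly the unproved statement, so as written your proof is incomplete; either carry out the Bismut--Yoshikawa vanishing argument, or replace the boundary-crossing step by the Satake codimension-two argument, which is what the paper does here and which requires no degeneration analysis at all.
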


\begin{proof}
The above assumption that $d\, \geq\, 2p-1$ is needed to ensure that
the line bundles are very ample. This enables us to invoke \cite[Theorem 3]{PRS}.

{}From Proposition~\ref{pr:pic} and Proposition~\ref{pr:kaehindep} we know
the constant $c$ in the proposition depends only on the degree $d$ and the
complex structure of the Riemann surface, and it is independent of the metric on
$X$.

Let $\cM_p$ be the moduli space of compact Riemann surfaces of genus $p$.
Let $\widetilde{\cM}_p$ be the Satake compactification of $\cM_p$. We
recall that $\widetilde{\cM}_p$ is defined to be the closure of $\cM_p$ in
the Satake compactification of the moduli space of principally polarized
Abelian varieties \cite{Ba}. The Satake compactification
$\widetilde{\cM}_p$ is an irreducible projective variety, and the complement
$$
\widetilde{\cM}_p\setminus \cM_p\, \subset\, \widetilde{\cM}_p
$$
is of codimension at least two \cite{Ba}, \cite[p. 45]{HM}. This codimension is one
if $p\,=\, 2$. The assumption in the proposition that $p\, \geq\, 3$ is needed here.

Consequently, there is a dense subset $U\, \,\subset\, \cM_p$ in Euclidean
topology satisfying the following condition: for any two points $x\, ,y\,
\in\, U$, there are compact Riemann surfaces $C_1\, ,\cdots\, , C_\ell$,
and holomorphic maps
$$
\phi_i\, : C_i\, \longrightarrow\, \cM_p\, ,
$$
such that
\begin{itemize}
\item $\phi_i(C_i)\, \subset\, U$ for all $1\,\leq\, i\,\leq\, \ell$,

\item $\bigcup_{i=1}^\ell \phi_i(C_i)$ is connected, and

\item $\{x\, ,y\}\, \subset\, \bigcup_{i=1}^\ell \phi_i(C_i)$.
\end{itemize}
The constant $c$ is clearly continuous over $\cM_p$.
Therefore, to prove the proposition, it is enough to show that for any
holomorphic map
$$
\phi\, :\, C\, \longrightarrow\, \cM_p\, ,
$$
where $C$ is some compact connected Riemann surface, the constant $c$ in the
proposition remains unchanged as the Riemann surface $X$ moves over
$\phi(C)\,\subset\, \cM_p$ (with $d$ fixed).

Take $(C\, ,\phi)$ as above. There is a finite (possibly ramified) covering
$$
\gamma\, :\, \widetilde{C}\, \longrightarrow\, C
$$
such that there is an algebraic family of Riemann surfaces over
$\widetilde{C}$ represented by the morphism
$$
\phi\circ\gamma\, :\,\widetilde{C}\, \longrightarrow\, \cM_p\, .
$$
In other words,
$\phi\circ\gamma$ is the classifying map for this family. It may be
mentioned that we may take
$\widetilde{C}$ to be the parameter space for Riemann surfaces corresponding
to points in $\phi(C)$ equipped with a level structure.

In view of Proposition~\ref{thm2}, we conclude that the quotient $\log
(h^Q/h^D)$ is constant for the above family over $\widetilde{C}$, because
there are no nonconstant harmonic functions on $\widetilde{C}$. Therefore,
$h^Q/h^D$  is a constant. As explained above, this completes the proof of
the proposition.
\end{proof}

\section{Comparison of metrics: The general case}

The following is the key lemma.

\begin{lemma}\label{le:singcurv}
Let \/$\overline C$ be a smooth complete connected complex curve, and let
$C\,\subset\, {\overline C}$ be the complement finitely many points. Let
$f\,:\, {\overline \cX}\,\longrightarrow\,{\overline C}$ be a flat
projective family of $n$-dimensional varieties, which is smooth with
connected fibers over $C$. Let $\overline{\cL}$ be a relatively very ample
line bundle over ${\overline \cX}$ equipped with a hermitian structure.
Define ${\cX}\, :=\, f^{-1}(C)$, and ${\cL}\, :=\,
\overline{\cL}\vert_{\cX}$. Then there is a positive real number $c$ such
that
$$
c\cdot h^Q\,=\, h^D\, ,
$$
where $h^Q$ is the Quillen metric on the line bundle ${\rm Det}(({\cL}-
{\mathcal O}_{\cX})^{\otimes (n+1)}\otimes ({\cL})^{\otimes m})$ and $h^D$
is the metric on the Deligne pairing $\langle {\cL}\, ,\cdots\,
,{\cL}\rangle$ (recall that the line bundles are identified by \eqref{vp}).
\end{lemma}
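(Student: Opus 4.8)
The plan is to reduce the assertion to the absence of nonconstant harmonic functions on the compact curve $\ol C$. Over $C$ the family $f$ is smooth, so Proposition~\ref{thm2} applies fiberwise and shows that the curvatures of $h^Q$ and $h^D$ agree on $C$. Identifying the two line bundles by $\varphi$ as in \eqref{vp}, the function
$$
u\,:=\, \log(h^Q/h^D)
$$
is therefore harmonic on $C$. Since $C\,=\,\ol C\setminus\{p_1,\ldots,p_k\}$ is noncompact, $u$ need not be constant, and the entire difficulty is concentrated at the finitely many punctures $p_j$. The strategy is to show that $u$ extends to a harmonic function on all of $\ol C$; as $\ol C$ is compact and connected, this forces $u$ to be constant, which is exactly the assertion $c\cdot h^Q\,=\,h^D$ with $c\,=\,\exp(-u)\,>\,0$.

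Next I would control each metric separately near a puncture $p_j$, using a local coordinate $t$ with $t\,=\,0$ at $p_j$. On the Deligne side, the pairing $\langle\ol{\cL},\ldots,\ol{\cL}\rangle$ is defined for the flat projective family $\ol\cX\to\ol C$, and by the continuity result of Moriwaki~\cite{mo} the metric $h^D$ extends to a continuous metric on this line bundle over $\ol C$; in particular $\log h^D$ is bounded near $p_j$. On the Quillen side, the determinant line bundle is likewise defined over all of $\ol C$, and the asymptotic behavior of $h^Q$ as the fiber $f^{-1}(p_j)$ degenerates is governed by the estimates of Bismut~\cite{bismut} and Yoshikawa~\cite{yoshi}: $\log h^Q$ admits an asymptotic expansion whose leading singular (logarithmic) term has a coefficient given by a fiber integral over the singular fiber $f^{-1}(p_j)$ of a degree $2n$ characteristic form built from $ch(\xi,h_\xi)$ and Todd classes, where $\xi\,=\,(\cL-\cO_\cX)^{\otimes(n+1)}\otimes\cL^{\otimes m}$.

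The crucial point, exactly as in Proposition~\ref{pr:kaehindep}, is the special shape of the Chern character of $\xi$. Since
$$
ch(\xi,h_\xi)\,=\, c_1(\cL,h)^{n+1}+\text{higher order terms}
$$
begins in degree $2(n+1)$, while the relevant integrand is a form of degree $2n$ integrated over the $n$-dimensional singular fiber, every contribution produced by $ch(\xi,h_\xi)$ has degree strictly larger than $2n$ and hence contributes nothing. The coefficient of the logarithmic term in the Quillen asymptotics therefore vanishes, so $\log h^Q$ is bounded near $p_j$ as well. Consequently $u\,=\,\log h^Q-\log h^D$ is a bounded harmonic function on the punctured disk $\{0\,<\,|t|\,<\,\varepsilon\}$, and by the removable singularity theorem for bounded harmonic functions it extends harmonically across $p_j$. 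Carrying this out at every puncture yields a harmonic function on $\ol C$, which is constant; this completes the proof.

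I expect the main obstacle to be this third step: extracting from the degeneration estimates of \cite{bismut} and \cite{yoshi} the precise form of the leading asymptotic coefficient of $h^Q$ and verifying that it is indeed the fiber integral annihilated by the degree count above. This requires matching the normalizations of those references to the virtual bundle $\xi$ and checking that no intermediate ($\log\log$ or similar) growth spoils the boundedness of $u$. Should only a bound of the form $u\,=\,O(\log|t|)$ be available rather than outright boundedness, one would additionally have to verify that the logarithmic flux $\frac{1}{2\pi}\oint_{|t|=\varepsilon} d^c u$ vanishes at each $p_j$ before invoking removability, and the vanishing of the Quillen coefficient above is precisely what guarantees this.
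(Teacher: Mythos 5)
Your overall strategy is the one the paper follows: harmonicity of $\log(h^Q/h^D)$ on $C$ via Proposition~\ref{thm2}, Moriwaki's continuity theorem for $h^D$, the Bismut--Yoshikawa degeneration results for $h^Q$ with the singular contribution killed by the degree count on $ch(\xi,h_\xi)$, and finally the absence of nonconstant harmonic functions on a compact curve. Your closing remark about checking the flux if only an $O(\log|t|)$ bound is available is essentially how the paper argues: it writes the harmonic function near a puncture as a constant plus a $\log|s|$ term plus a harmonic remainder and uses the $L^1_{loc}$ property of the extended Chern current to kill the logarithmic coefficient, rather than invoking outright boundedness.

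There is, however, one concrete step you skip that the paper cannot do without. The hypotheses of the lemma only give a flat projective family $\overline{\cX}\to\overline{C}$: the total space may be singular over the boundary points, and the singular fibers are arbitrary. The degeneration theorems of Bismut and Yoshikawa are not stated in that generality; Bismut requires a smooth total space with normal crossing singular fibers (at most two components through a point), and Yoshikawa likewise works on a resolved model. The paper therefore first applies Mumford's semistable reduction to produce a finite cover $\nu:\widetilde{C}\to\overline{C}$ and a desingularization $Z\to\overline{\cX}\times_{\overline{C}}\widetilde{C}$ whose boundary fibers are simple normal crossing divisors, pulls back $\overline{\cL}$, runs the asymptotic analysis on $Z\to\widetilde{C}$, and only then descends the conclusion (constancy of $\log(h^Q/h^D)\circ\nu$ implies constancy of $\log(h^Q/h^D)$). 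Without this reduction your invocation of the asymptotic expansion of $\log h^Q$ at the punctures of $\overline{C}$ itself is not justified by the cited references. The fix is routine but must be stated; with it, your argument coincides with the paper's.
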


\begin{proof}
From Proposition~\ref{thm2} we know that $\log(h^Q/h^D)$ is harmonic on
$C$. A theorem of Moriwaki says that the hermitian structure $h^D$ possesses
a continuous extension to the line bundle
$$
\langle \overline{\cL}\, ,\cdots\, ,\overline{\cL}\rangle\,
\longrightarrow\, \overline{C}
$$
(see \cite[Theorem A]{mo}).

The degeneration of the Quillen metric for semistable families was
first computed by Bismut in \cite[(0.7)]{bismut}, and this was generalized
by Yoshikawa in \cite[Theorem 1.1]{yoshi}. In order to apply this
result, first the  semistable reduction theorem of Mumford,
\cite[p.~54]{kkms}, is used. It yields the existence of a finite morphism
$$
\nu\,:\,{\wt C} \,\longrightarrow\, \overline{C}
$$
with the following property: The desingularization
$$
Z \,\longrightarrow\, \ol\cX\times_{\overline C}\wt C
$$
gives rise to a family $Z \,\longrightarrow\,\wt C$, which is smooth over
$\nu^{-1}(C)$ and whose fibers over points of $\nu^{-1}(\ol C\backslash
C)$ are simple normal crossings divisors. The projective variety $Z$ is
now equipped with the restriction of a Fubini--Study metric (with respect to some
projective embedding).

Let
$$
\phi\, :\, Z\longrightarrow\, {\wt C}
$$
be the natural projection. Let
\begin{equation}\label{eta}
\eta\, :\, Z\, \longrightarrow\, \ol\cX
\end{equation}
be the obvious morphism.

According to \cite[Theorem~0.1, (0.6)]{bismut}, the Chern form of the
Quillen metric over $C$ extends as the sum of two currents on $\widetilde
C$. The first current is in $L^r_{loc}({\widetilde C})$ with $1\,\leq\, r \,<\,2$,
and the second one is equal to
\begin{equation}\label{int}
-\frac{1}{2} \left[\int_\Sigma Td(T(\Sigma/{\widetilde
C}))E(\alpha)
ch(\xi)\right]^{(0)} \delta_\Delta\, ,
\end{equation}
where
\begin{itemize}
\item $\Delta\,:=\, (\nu\circ\phi)^{-1}(\overline{C}\setminus C)$ is
    the singular fiber,

\item $\Sigma\subset \Delta$ is the singular locus of $\Delta$,

\item $\alpha$ is the normal bundle of $\Sigma$ in $\Delta$,

\item $\xi$ is the virtual vector bundle $(\eta^*\overline{\mathcal L}
    - \cO_Z)^{\otimes (n+1)}\otimes (\eta^*\overline{\mathcal
L})^{\otimes m}$ equipped with the (virtual) hermitian structure
    $h_\xi$, defined by the hermitian metric on $\overline{\mathcal
    L}$ and the standard hermitian metric on $\cO_Z$, and

\item $E$ is the additive arithmetic genus that is generated by a
    certain holomorphic function on $\C$.
\end{itemize}

By \eqref{eq:chL}, the integral in \eqref{int} vanishes. Take any boundary
point $z_0\, \in\, \nu^{-1}(\overline{C}\setminus C)$; fix a local
holomorphic coordinate $s$ on an open neighborhood $U_{z_0}\, \subset\,
\widetilde{C}$ of $z_0$ such that $s(z_0)\,=\,0$. Since the function
$\log(h^Q/h^D)\circ \nu$ on $U_{z_0}\setminus\{z_0\}$ is  harmonic,
it is of the form
$$
a+ \gamma \log |s|\cdot {\rm Re}(\psi)\, ,
$$
where $a$ and $\gamma$ are constants, and $\psi$ is a holomorphic function
on $U_{z_0}\setminus\{z_0\}$. Since the Chern current of $h^Q$ is in
$L^1_{loc}$,  we conclude that
\begin{enumerate}
\item $\gamma\,=\,0$, and

\item $\psi$ extends holomorphically across $z_0$.
\end{enumerate}
Note that $h^D$, considered as function after choosing a trivialization of
$\langle \overline{\cL}\, ,\cdots\, ,\overline{\cL}\rangle$ over $U_{z_0}$,
is continuous and nowhere vanishing on $U_{z_0}$.

Consequently, $\log(h^Q/h^D)\circ \nu$ extends as a harmonic function from
$\nu^{-1}(C)$ to $\widetilde C$. Therefore, $\log(h^Q/h^D)$ is a constant
function.

However, the above theorem of Bismut applies to singular fibers, where at
most two components meet at any point. The methods of Bismut were extended
to the general case of arbitrary singular fibers of flat projective
morphisms by Yoshikawa in \cite{yoshi}. Yoshikawa's main theorem,
\cite[Theorem~1.1]{yoshi}, implies that the Quillen metric extends
continuously up to a certain summand, which consist of an integral
containing the Chern character form $ch(\xi, h_\xi)$ over an
$n$--dimensional variety. Again the integral vanishes by \eqref{eq:chL}.
This shows that both metrics $h^Q$ and $h^D$ on the determinant line
bundle extend continuously to $\widetilde C$; here we are using that the
line bundles
$$
{\rm Det}((\eta^*{\overline L}- {\mathcal
O}_{Z})^{\otimes (n+1)}\otimes (\eta^*{\overline L})^{\otimes m})
\, \longrightarrow\, \widetilde{C} ~\, \text{~and~}\,
\langle \eta^*{\overline L}\, ,\cdots\, ,\eta^*{\overline
L}\rangle\, \longrightarrow\,\widetilde{C}
$$
are canonically isomorphic (see \eqref{vp}, \cite[Theorem 3]{PRS}).
Consequently, the quotient $\log(h^Q/h^D)\circ\nu$ is a continuous
harmonic function of $\widetilde{C}$, implying that $\log(h^Q/h^D)$ is a
constant.
\end{proof}

The following theorem was mentioned in the introduction (Theorem
\ref{thm0}) as the main result.

\begin{theorem}\label{thm-m}
Let $f\,:\,X \,\longrightarrow\, S$ be a smooth projective surjective morphism
between integral schemes over $\mathbb C$. Let $(L\, ,h)$ be a relatively
very ample hermitian line bundle on $X$.
Fix a relative K\"ahler form on $X$, and also equip ${\mathcal O}_{X}$ with
the natural hermitian structure. Let $h^Q$ and $h^D$ be the
hermitian structures on the line bundles ${\rm Det}((L- {\mathcal
O}_{X})^{\otimes (n+1)}\otimes L^{\otimes m})$ and $\langle L\, , \cdots\,
,L\rangle$ respectively. Then there is a positive real number $c$ such
that
$$
\varphi^*h^D\, =\, c\cdot h^Q\, ,
$$
where $\varphi$ is the isomorphism in \eqref{vp}.

The constant $c$ is compatible with base change.

The constant $c$ is independent of the hermitian structure $h$.
\end{theorem}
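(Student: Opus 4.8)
\noindent\emph{Proof plan.} Write $u\,:=\,\log(\varphi^*h^D/h^Q)$, the logarithm of the ratio of the two hermitian metrics $\varphi^*h^D$ and $h^Q$ on the single line bundle ${\rm Det}((L-\mathcal O_X)^{\otimes(n+1)}\otimes L^{\otimes m})$ over $S$; this is a well-defined continuous real-valued function on $S$. The assertion $\varphi^*h^D=c\cdot h^Q$ is precisely the statement that $u$ is constant, so the whole point is to propagate the curve case, Lemma~\ref{le:singcurv}, to an arbitrary base. The plan is to reduce constancy of $u$ to constancy along curves in $S$, to deduce the latter from Lemma~\ref{le:singcurv}, and then to obtain base change and independence of $h$ as formal consequences.

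The first step is the reduction to curves. Since $S$ is an integral variety over $\mathbb C$, any two closed points $s_0,s_1\in S$ lie on a connected chain of irreducible curves contained in $S$; as $u$ is continuous, it therefore suffices to show that $u$ is constant along every irreducible curve $C_0\subset S$. Given such a $C_0$, I would normalize its closure to obtain a smooth complete connected curve $\overline C$ together with a finite morphism $\nu\,:\,\overline C\to S$, and set $C\subset\overline C$ to be the dense open subset mapping into $C_0$. Pulling the family back along $\nu$ yields a smooth family with connected fibers over $C$; embedding it fiberwise by the relatively very ample $L$ and taking the scheme-theoretic closure inside $\mathbb P^N_{\overline C}$ produces a flat projective family $\overline{\mathcal X}\to\overline C$ (flatness is automatic over a smooth curve, since every associated point of the closure dominates $\overline C$) carrying a relatively very ample line bundle $\overline{\mathcal L}$ restricting to $L$ over $C$. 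Extending the restriction of $h$ to a hermitian structure on $\overline{\mathcal L}$ puts us exactly in the hypotheses of Lemma~\ref{le:singcurv}, which gives that $u\circ\nu$ is constant on $C$, hence $u$ is constant on $C_0$. Chaining along the connecting curves shows $u$ is constant on all of $S$, producing the desired $c>0$.

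I expect the delicate point to be this reduction step rather than anything afterward: one must complete the restricted family over the added boundary points of $\overline C$ to a \emph{flat projective} family while keeping the line bundle relatively very ample and extending the metric, so that Lemma~\ref{le:singcurv} (which already absorbs the degeneration analysis of Moriwaki and of Bismut--Yoshikawa) applies verbatim. Everything substantive about the metrics is contained in that lemma, so my remaining task is only to set up the curve, its completion, and the chain-connectedness correctly. Compatibility with base change is then immediate: for a morphism $g\,:\,S'\to S$ both $h^Q$ and $h^D$ pull back under $g$ by functoriality of the Quillen and Deligne constructions, so $g^*u\equiv\log c$ and the constant attached to the pulled-back family is again $c$.

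Finally, for independence of $h$ I would run the interpolation argument used in Proposition~\ref{pr:pic}. Given two hermitian structures $h^1,h^2$ on $L$, take $Z=\mathbb P^1$ with two marked points $z_0,z_1$, form the product family $X\times Z\to S\times Z$ with the relatively very ample line bundle $p_X^*L$, and choose a hermitian structure $H$ on $p_X^*L$ restricting to $h^1$ over $S\times\{z_0\}$ and to $h^2$ over $S\times\{z_1\}$. Since $S\times Z$ is again integral over $\mathbb C$, the existence statement already proved applies to this family and furnishes a single constant $c(H)$ over all of $S\times Z$; base change to $S\times\{z_0\}$ and to $S\times\{z_1\}$ then forces $c(h^1)=c(H)=c(h^2)$. (There is no circularity, because the existence half of the theorem holds for every integral base, in particular for $S\times Z$, and does not use independence of $h$.) Independence from the chosen relative K\"ahler form is separate and already recorded: $h^Q$ does not depend on it by Proposition~\ref{pr:kaehindep}, while $h^D$ never involves it.
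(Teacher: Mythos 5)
Your proposal follows the paper's own proof essentially verbatim: reduce to curves in $S$, complete the pulled-back family over the compactified (normalized) curve so that Lemma~\ref{le:singcurv} applies, chain such curves together using continuity of the ratio of metrics, and obtain base-change compatibility and independence of $h$ from functoriality together with the interpolation argument of Proposition~\ref{pr:pic}. The one wrinkle worth noting is that the restriction of $h$ to the open part need not extend smoothly to $\overline{\mathcal L}$ over the boundary fibers, but since Proposition~\ref{pr:pic} (applied fiberwise, via base change) makes the pointwise ratio independent of the chosen metric, any smooth hermitian structure on $\overline{\mathcal L}$ suffices --- a point the paper likewise leaves implicit.
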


\begin{proof}
In view of the base change property of the isomorphism in \eqref{vp}, the
first part of the theorem follows from Lemma~\ref{le:singcurv}. To explain this,
take any morphism
$$
\rho\, :\, C\,\longrightarrow\, S\, ,
$$
where $C$ is a smooth complex curve such that
\begin{itemize}
\item the pullback $C\times_{S} X$
extends to a flat family of projective varieties over the smooth compactification
$\overline{C}$ of $C$, and

\item the pullback of $L$ to $C\times_{S} X$ extends to a relative very ample line
bundle for the above family over $\overline{C}$.
\end{itemize}
{}From Lemma~\ref{le:singcurv} and the
base change property of the isomorphism in \eqref{vp} we have the following.
There is a positive real number $c_\rho$ such that for any point $z\,\in\, C$, the equality
$$
(\varphi^*h^D)_{\rho(z)} \, =\, c_\rho \cdot (h^Q)_{\rho(z)}
$$
holds. Now we define an equivalence relation on the points on $S$. Two points
$z$ and $z'$ of $S$ will be called equivalent if there are finitely many morphisms
$\rho_i$ of the above type such that the union $\bigcup_i \text{image}(\rho_i)$ is
connected and contains both $z$ and $z'$. From the above observation and
the base change property of the isomorphism in \eqref{vp} we conclude that for any
equivalence class $\mathcal C$, there is a positive real number $c_{\mathcal C}$
such that for every point $z\,\in\, c_{\mathcal C}$, we have
$$
(\varphi^*h^D)_z \, =\, c_{\mathcal C} \cdot (h^Q)_z\, .
$$
Since there exists an equivalence class which is dense in $S$ in the Euclidean
topology, the first part of the theorem follows.

The determinant line bundle, the Quillen metric, the Deligne pairing and
the hermitian structure on the Deligne pairing are all compatible with
base change. The isomorphism $\varphi$ in \eqref{vp} is also compatible
with base change. Therefore, the second part of the theorem follows.

The third part follows from Proposition~\ref{pr:pic}.
\end{proof}

\section{Further application of the method}\label{sec-l}

We recall Theorem 5.8 in page 371 of \cite{ACG}:

\begin{theorem}[\cite{ACG}]\label{t-a}
Let $f\, :\, X\, \longrightarrow\, S$ be an algebraic family of nodal
complex projective curves. Let $L$ and $M$ be two algebraic line bundles
on $X$. There is a canonical isomorphism
$$
\langle L\, ,M\rangle \, \stackrel{\sim}{\longrightarrow}\,
{\rm Det}(L\otimes M)\otimes {\rm Det}(L)^*\otimes {\rm Det}(M)^*
\otimes {\rm Det}({\mathcal O}_X)\,=\, {\rm Det}((L-{\mathcal
O}_X)(M-{\mathcal O}_X))
$$
compatible with base change.
\end{theorem}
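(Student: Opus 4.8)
The plan is to realize both sides of the asserted isomorphism as \emph{symmetric biadditive} functors of the pair $(L\, ,M)$, compatible with base change, and then to match them on the generators furnished by relative Cartier divisors. Write $\langle L\, ,M\rangle_{\det}\,:=\,{\rm Det}(L\otimes M)\otimes{\rm Det}(L)^*\otimes{\rm Det}(M)^*\otimes{\rm Det}(\cO_X)$ for the right-hand side. For the Deligne pairing, biadditivity, symmetry, and base-change compatibility are built into its construction, and there is the normalization
$$
\langle\cO_X(D)\, ,M\rangle\,\cong\,{\rm Norm}_{D/S}(M|_D)
$$
whenever $D$ is a relative Cartier divisor that is finite and flat over $S$ and lies in the smooth locus of $f$ (so in particular avoids the nodes). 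The first task is therefore to establish the matching normalization for $\langle L\, ,M\rangle_{\det}$.

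First I would treat the divisorial case $L\,=\,\cO_X(D)$ with $D$ as above. In the $K$-group one has $(L-\cO_X)(M-\cO_X)\,=\,[M(D)]-[\cO_X(D)]-[M]+[\cO_X]$, so feeding the two short exact sequences
$$
0\,\longrightarrow\, M\,\longrightarrow\, M(D)\,\longrightarrow\,(M(D))|_D\,\longrightarrow\,0 \quad\text{and}\quad 0\,\longrightarrow\,\cO_X\,\longrightarrow\,\cO_X(D)\,\longrightarrow\,(\cO_X(D))|_D\,\longrightarrow\,0
$$
into the additivity of ${\rm Det}$ collapses the four determinant factors to ${\rm Det}((M(D))|_D)\otimes{\rm Det}((\cO_X(D))|_D)^{-1}$. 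Since $D\,\longrightarrow\, S$ is finite flat with image missing the nodes, $Rf_*$ of any sheaf supported on $D$ reduces to the degree-zero direct image, whose determinant is the norm; hence this last expression equals ${\rm Norm}_{D/S}\big((M(D))|_D\otimes((\cO_X(D))|_D)^{-1}\big)\,=\,{\rm Norm}_{D/S}(M|_D)$, matching the Deligne side.

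Next I would pass to arbitrary $(L\, ,M)$ by dévissage. Since $f$ is projective, Zariski-locally on $S$ every line bundle can be written as $\cO_X(D_1)\otimes\cO_X(D_2)^{-1}$ with each $D_i$ finite flat over $S$ and disjoint from the nodal locus; this is a genericity statement, using that the relative smooth locus is fibrewise dense and that each fibre has only finitely many nodes, so very ample sections can be chosen to avoid them. Biadditivity of $\langle\cdot\, ,\cdot\rangle_{\det}$ then reduces the general case to the divisorial one. Biadditivity in the first variable is equivalent to the canonical triviality of the trilinear cube term ${\rm Det}((L-\cO_X)(L'-\cO_X)(M-\cO_X))$, which is numerically forced on a relative curve, since its Chern character begins in degree six and hence contributes nothing to the first Chern class on $S$; its canonical triviality is Deligne's theorem of the cube for $\det Rf_*$ \cite{De}.

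The hard part will be the two points where the nodes intervene: keeping the representing divisors off the singular locus of the fibres, and ensuring that the cube relation and the ensuing gluing persist over the nodal fibres. Independence of the constructed isomorphism from the chosen presentation $L\,=\,\cO_X(D_1-D_2)$ is precisely the Weil reciprocity relation that also pins down the transition functions of the Deligne pairing, so the two constructions are forced to agree and the local isomorphisms glue to a global canonical one. Because nodal curves are Gorenstein and $f$ is flat and projective, $Rf_*$, its determinant, and their formation commute with base change uniformly across smooth and nodal fibres; compatibility of the final isomorphism with base change then follows from that of its three ingredients, namely the norm along a finite flat $D$, the additivity of ${\rm Det}$ in exact sequences, and the Knudsen--Mumford base change of $\det Rf_*$.
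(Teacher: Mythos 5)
The paper does not prove this statement: Theorem~\ref{t-a} is quoted from \cite{ACG} (Theorem 5.8, p.~371), and the result goes back to Deligne \cite{De}, so there is no internal proof to compare yours against. Your sketch is, in substance, the standard argument from those sources: realize both sides as symmetric biadditive functors of $(L\, ,M)$ compatible with base change, normalize on $L\,=\,\cO_X(D)$ with $D$ finite flat over $S$ and contained in the smooth locus of $f$, where both sides become ${\rm Norm}_{D/S}(M|_D)$, and glue. Your $K$--theoretic collapse of the four determinant factors to ${\rm Det}((M(D))|_D)\otimes{\rm Det}((\cO_X(D))|_D)^{-1}$ and its identification with the norm are correct, and your observation that biadditivity of the determinant side is exactly the canonical triviality of ${\rm Det}((L-\cO_X)(L'-\cO_X)(M-\cO_X))$ is the right reduction.

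Two places where the sketch does less than it claims. First, the remark that the cube term has Chern character beginning in degree six only shows that the corresponding line bundle on $S$ is numerically trivial; what is needed is a canonical, base-change-compatible trivialization, and that is the actual content of Deligne's theorem of the cube for $\det Rf_*$. Over nodal fibres this is where most of the work in \cite{De} and \cite{ACG} sits, so your proof defers to the crux rather than supplying it. Second, independence of the constructed isomorphism from the presentation $L\,=\,\cO_X(D_1)\otimes\cO_X(D_2)^{-1}$ is asserted to be ``precisely Weil reciprocity''; that is the correct mechanism on the Deligne-pairing side, but on the determinant side one must actually exhibit the commuting square of trivializations coming from the two exact sequences, which is again the substantive step in the cited sources. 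Neither point is a wrong turn --- they are the standard citations --- but the proposal should be read as a correct reduction to \cite{De}, not as a self-contained proof.
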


Theorem~\ref{t-a} generalizes to higher dimensions. To explain this, let
$X\, \longrightarrow\, S$ be a flat projective surjective morphism, of
relative dimension $n$, between integral schemes over $\mathbb C$. Fix
line bundles $L_0\, , L_1\, ,\cdots\, , L_n$ on $X$.

\begin{theorem}[\cite{BSW}]\label{t-bsw}
There is a canonical isomorphism of the Deligne pairing
$$
\langle L_0\, ,\cdots\, ,L_n\rangle\, \longrightarrow\, S
$$
and the determinant line bundle ${\rm Det}(\bigotimes_{i=0}^n
(L_i-{\mathcal O}_X))$, which is compatible with base change.
\end{theorem}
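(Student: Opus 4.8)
The plan is to argue by induction on the relative dimension $n$, paralleling the classical divisorial construction of the Deligne pairing. The base case $n\,=\,1$ is the relative-dimension-one statement, which for families of (nodal) curves is precisely Theorem~\ref{t-a} with $L_0\,=\,L$ and $L_1\,=\,M$. So I would assume $n\,\geq\,2$ and that the assertion holds in all relative dimensions $<n$; one must run the induction for the class of flat projective families rather than only for integral total spaces, since the divisors produced below need not be integral.

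For the inductive step I would work locally on $S$ and choose a rational section $\sigma_n$ of $L_n$ whose divisor $D\,:=\,{\rm Div}(\sigma_n)$ is flat over $S$ of relative dimension $n-1$; write $j\,:\,D\,\hookrightarrow\,X$ for the inclusion. On the Deligne side, the inductive definition of the pairing gives a canonical isomorphism $\langle L_0\, ,\cdots\, ,L_n\rangle_{X/S}\,\cong\,\langle L_0|_D\, ,\cdots\, ,L_{n-1}|_D\rangle_{D/S}$, and by the induction hypothesis the right-hand side is ${\rm Det}_{D/S}(\bigotimes_{i=0}^{n-1}(L_i|_D-\cO_D))$. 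On the determinant side I would use the $K$-theoretic identity $L_n-\cO_X\,=\,j_*(L_n|_D)$, coming from the structure sequence $0\,\to\,\cO_X\,\to\,L_n\,\to\,j_*(L_n|_D)\,\to\,0$, together with the projection formula and the functoriality ${\rm Det}_{X/S}\circ j_*\,=\,{\rm Det}_{D/S}$, to obtain
$$
{\rm Det}_{X/S}\Big(\bigotimes_{i=0}^{n}(L_i-\cO_X)\Big)\,\cong\,
{\rm Det}_{D/S}\Big(\bigotimes_{i=0}^{n-1}(L_i|_D-\cO_D)\cdot L_n|_D\Big)\, .
$$
Writing $L_n|_D\,=\,(L_n|_D-\cO_D)+\cO_D$ and invoking additivity of ${\rm Det}$ splits the right-hand side as the desired factor ${\rm Det}_{D/S}(\bigotimes_{i=0}^{n-1}(L_i|_D-\cO_D))$ tensored with the ``over-determined'' determinant ${\rm Det}_{D/S}(\bigotimes_{i=0}^{n}(L_i|_D-\cO_D))$.

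The main obstacle is to show that this last factor is \emph{canonically} trivial: over the family $D/S$ of relative dimension $n-1$ one has a product of $n+1$ classes, one more than permitted, each lying in the augmentation ideal. Numerically this is immediate — the Chern character of the product begins in degree $n+1$, so the Grothendieck--Riemann--Roch integrand $\int_{D/S}ch(\cdot)\,Td(D/S)$ has no component of the degree needed to contribute to $c_1$ — but canonical triviality is stronger. I would establish it by a second, descending induction of the same divisorial type: restricting to the divisor of one of the factors lowers the relative dimension by one while keeping the product over-determined, and the recursion terminates in the finite-flat case, where triviality of such a product follows from the classical norm/trace description of $\det R^\bullet f_*$. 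Granting this vanishing, the two displayed identifications combine into the required canonical isomorphism $\langle L_0\, ,\cdots\, ,L_n\rangle\,\cong\,{\rm Det}(\bigotimes_{i=0}^n(L_i-\cO_X))$.

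Finally I would verify the two coherence points. Independence of the chosen section $\sigma_n$ — and hence that the local isomorphisms glue to a global one — is a Weil-reciprocity argument: comparing the constructions attached to two rational sections reduces, on the overlap of their divisors, to the multiplicativity and symmetry of the symbol, exactly as in the well-definedness of the transition functions recalled in Section~\ref{sec2}. Compatibility with base change is then automatic, since every ingredient — the determinant of cohomology $\det R^\bullet f_*$, the identity $L_n-\cO_X\,=\,j_*(L_n|_D)$ for a relatively flat $D$, the projection formula, the functoriality of ${\rm Det}$ under $j_*$, and the inductive description of the Deligne pairing — commutes with arbitrary base change, so the isomorphism is assembled entirely from base-change-compatible pieces.
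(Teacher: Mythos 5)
This theorem is quoted from \cite{BSW} and the present paper supplies no proof of its own, so the comparison must be with the argument in \cite{BSW}: your divisorial induction --- the structure sequence $0\to\cO_X\to L_n\to j_*(L_n|_D)\to 0$, the projection formula and functoriality of ${\rm Det}$ under $j_*$, the canonical triviality of determinants of ``over-determined'' products of augmentation-ideal classes established by a descending induction ending in the norm computation for finite flat morphisms, and Weil reciprocity for independence of the chosen section --- is essentially that proof. The only refinements I would suggest are to reduce to regular (rather than rational) sections of very ample twists by multilinearity of both sides, and to terminate the induction at relative dimension $0$, where both sides are the norm, rather than at $n=1$, since the divisors produced along the way need not be families of nodal curves and so Theorem~\ref{t-a} does not directly apply to them.
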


For each $0\, \leq\, i\,\leq\, n$, fix a hermitian structure $h_i$ on $L_i$.
This produces a hermitian structure $h^D$ on the line bundle $\langle
L_0\, ,\cdots\, ,L_n\rangle\, \longrightarrow\, S$. Now fix a relative
K\"ahler form $\omega_{X/S}$ on $X$. This and the hermitian metrics
$h_i$ together define a hermitian structure $h^Q$ on the line bundle ${\rm
Det}(\bigotimes_{i=0}^n (L_i- {\mathcal O}_X))$ \cite{Qu}, \cite{BGS}.

It can be shown that $h^Q$ is independent of the choice of the relative
K\"ahler form $\omega_{X/S}$. Indeed, its proof is exactly identical to that of
Proposition~\ref{pr:kaehindep}. In fact the following holds:

\begin{theorem}\label{thml}
There is a positive real number $c$ such that the isomorphism in
Theorem~\ref{t-bsw} takes the hermitian structure $h^D$ to $c\cdot h^Q$.
\end{theorem}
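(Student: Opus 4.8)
The plan is to run, step for step, the proof of Theorem~\ref{thm-m}, whose three pillars were the equality of the curvatures of $h^Q$ and $h^D$ (Proposition~\ref{thm2}), the absence of nonconstant harmonic functions on a compact connected complex manifold, and the control of both metrics under degeneration (Moriwaki's extension \cite{mo} of the Deligne metric, together with the Bismut--Yoshikawa analysis \cite{bismut}, \cite{yoshi} of the Quillen metric). The only genuinely new input is the curvature comparison in the present multilinear setting; once that is established, along with the corresponding analog of \eqref{eq:chL}, the harmonic-function and degeneration arguments transcribe essentially verbatim.

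First I would prove the analog of Proposition~\ref{thm2} for the virtual bundle $\xi\,:=\,\bigotimes_{i=0}^n (L_i-\cO_X)$ with its virtual hermitian structure $h_\xi$. The Chern form of $h^D$ on $\langle L_0\, ,\cdots\, ,L_n\rangle$ is the fiber integral $\int_{X/S} c_1(L_0,h_0)\wedge\cdots\wedge c_1(L_n,h_n)$, the multilinear version of \eqref{del} (see \cite[Section~1.2]{Zh}), while by Bismut--Gillet--Soul\'e \cite{BGS} the Chern form of $({\rm Det}(\xi),h^Q)$ is the degree-two component of $\int_{X/S} ch(\xi,h_\xi)\cdot Td(X/S)$. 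The decisive point is the multiplicativity of the Chern character:
$$
ch(\xi,h_\xi)\,=\,\prod_{i=0}^n ch(L_i-\cO_X,h_i)\,=\,\prod_{i=0}^n\bigl(c_1(L_i,h_i)+\text{higher order terms}\bigr)\, .
$$
Since each factor begins in degree two, $ch(\xi,h_\xi)$ is supported in degrees $\ge 2(n+1)$, with leading term $\prod_{i=0}^n c_1(L_i,h_i)$; this is the analog of \eqref{eq:chL} here. As $Td(X/S)$ has constant term $1$, the degree $2(n+1)$ part of $ch(\xi,h_\xi)\cdot Td(X/S)$ is exactly $\prod_{i=0}^n c_1(L_i,h_i)$, whose fiber integral recovers the Deligne Chern form. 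Hence the curvatures of $h^Q$ and $h^D$ coincide.

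With the curvatures equal, $\log(h^Q/h^D)$ is harmonic on the smooth locus of the base, and from here the argument mirrors Theorem~\ref{thm-m}. I would reduce to a base curve via the equivalence relation on points of $S$ built from connected chains of maps from smooth curves, invoking base-change compatibility of the isomorphism in Theorem~\ref{t-bsw}; a single constant $c$ then suffices because one equivalence class is dense. For a fixed curve I would apply the analog of Lemma~\ref{le:singcurv}: Moriwaki's theorem \cite{mo} gives a continuous extension of $h^D$ across the boundary points, while the degeneration formulas of Bismut \cite{bismut} and Yoshikawa \cite{yoshi} describe the Quillen metric up to a singular current whose coefficient is an integral of $ch(\xi,h_\xi)$ over a variety of dimension at most $n$. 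Because $ch(\xi,h_\xi)$ lives in degrees $\ge 2(n+1)>2n$, this integral vanishes---precisely the role that \eqref{eq:chL} plays in Lemma~\ref{le:singcurv} and \eqref{int}. Thus $\log(h^Q/h^D)$ extends harmonically to the compactified curve and is constant, giving the desired $c$; its independence from the relative K\"ahler form follows as in Proposition~\ref{pr:kaehindep}, the Bismut--Gillet--Soul\'e integrand in \eqref{eq:bgs02} again vanishing by the same degree count.

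The main obstacle is the bookkeeping of degrees ensuring that every correction term---the K\"ahler-variation integral in \eqref{eq:bgs02}, the singular current \eqref{int}, and Yoshikawa's boundary contribution---is annihilated by the vanishing of the low-degree part of $ch(\xi,h_\xi)$. This is exactly where the tensor-product structure $\xi=\bigotimes_{i=0}^n(L_i-\cO_X)$ is indispensable: multiplicativity of the Chern character forces the bottom degree of $ch(\xi,h_\xi)$ up to $2(n+1)$, and this single fact drives all the required vanishings. Everything else is a faithful transcription of the arguments already given for Theorem~\ref{thm-m}.
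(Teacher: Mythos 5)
Your proposal is correct and follows exactly the route the paper intends: the paper's entire proof of Theorem~\ref{thml} is the single sentence that it is the same as that of Theorem~\ref{thm-m}, and you have accurately identified and verified the one new ingredient (the bottom degree of $ch(\bigotimes_i(L_i-\cO_X),h_\xi)$ being $2(n+1)$ by multiplicativity of the Chern character, which drives both the curvature comparison and all the vanishing of boundary and K\"ahler-variation terms). No discrepancies to report.
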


The proof of Theorem~\ref{thml} is same as that of Theorem~\ref{thm-m}.

\section*{Acknowledgements}

The first--named author acknowledges the
support of the J. C. Bose Fellowship.


\end{document}